\newtheorem{theorem}{Theorem}[section]
\newtheorem{corollary}[theorem]{Corollary}
\newtheorem{lemma}[theorem]{Lemma}
\newtheorem{proposition}[theorem]{Proposition}
\newtheorem{remark}[theorem]{Remark}
\newenvironment{proof}[1][Proof]{\noindent\textbf{#1.} }{\ \rule{0.5em}{0.5em}}
    \newcommand{\cb}{{\mathcal B}}   
\newcommand{\cm}{{\mathcal M}}   
\numberwithin{equation}{section}
\begin{document}

\begin{center}
{\large Positive solutions to semilinear Dirichlet problems with general boundary data}\\[3mm]
Lucian Beznea\footnote{%
Simion Stoilow Institute of Mathematics of the Romanian Academy, 
P.O. Box \mbox{1-764,} RO-014700 Bucharest, Romania, and 
National University of Science and Technology Politehnica Bucharest, CAMPUS Institute. {E-mail}: {\
lucian.beznea@imar.ro}} and {Alexandra Teodor}\footnote{%
National University of Science and Technology Politehnica Bucharest and Simion Stoilow Institute of
Mathematics of the Romanian Academy. {E-mail}:
alexandravictoriateodor@gmail.com}
\end{center}

\vspace{5mm}
\begin{abstract}
	We give a probabilistic representation of the solution to a semilinear
	elliptic Dirichlet problem with general (discontinuous) boundary data. 
	The boundary behaviour of the solution is in the sense of the
	controlled convergence initiated by A. Cornea.
	Uniqueness results for the solution are also provided.
\end{abstract}

\noindent
{\bf Keywords:} 
Semilinear Dirichlet problem, boundary behaviour, controlled convergence.

\noindent
{\bf Mathematics Subject Classification (2010):} 
35J65,       
60J45,  	
60J35,    	
60J57,  	
31C05, 	
60J65.  


\vspace{5mm}

\section{Introduction} 

We consider the following semilinear elliptic equation with Dirichlet
boundary conditions:
\begin{equation} \label{prob initiala}
\left\{ 
\begin{array}{c}
\frac{1}{2}\Delta u-F\left( x,u\right) =0\text{ } \\[4.1mm] 
\hspace*{-2mm} \underset{D\ni x\rightarrow y}{\lim }u\left( x\right) =\phi \left( y\right) 
\text{ }%
\end{array}%
\right. 
\begin{array}{c}
\text{for }x\in D, \\[4mm]
\text{ for }y\in \partial D,%
\end{array}
\end{equation}%
where $D$ is  a bounded, regular domain in $\mathbb{R}^{d},$ $d\geq
3,$ the boundary condition $\phi $ is a non-negative, bounded and Borel
measurable real-valued function defined on the boundary $\partial D$ of $D$, 
and $F$ is a
real-valued Borel measurable function on $D\times \left( 0,b\right) $ for
some $b\in (0,\infty ]$ such that $F\left( x,\cdot \right) $ is continuous
on $(0,b)$ for every $x\in D$ and $0\leq F\left( x,u\right) \leq U\left(
x\right) u$ for every $\left( x,u\right) \in D\times \left( 0,b\right) ,$
where $U$ is a positive Green-tight function on $D$.

A function $u\in C\left( D\right) $ solving (\ref{prob initiala}) is
called the \textit{classical solution }of the nonlinear Dirichlet problem (%
\ref{prob initiala}) with boundary data $\phi ,$ associated with the
operator $u\longmapsto \frac{1}{2}\Delta u-F\left( \cdot ,u\right)$. 
Here a positive solution means a solution that is strictly positive on $D$.

If $\phi $ is a non-negative and continuous real-valued function on $%
\partial D,$ then (\ref{prob initiala}) is a special case of the problem
studied by Chen, Williams, and Zhao in \cite{Chen93}. 
Under  additional restrictions on $\phi $, they proved the existence of positive continuous
solutions to the problem (\ref{prob initiala}) in the weak sense (Theorem
1.1 from \cite{Chen93}), using an implicit probabilistic representation
together with Schauder's fixed point theorem and compactness criteria on spaces of continuous functions.
The existence of positive solutions of singular nonlinear elliptic equations (of the type $(\ref{prob initiala})$) with Dirichlet boundary conditions
was also studied in \cite{Hir}.

If $\phi $ is discontinuous on 
$\partial D$ then the problem (\ref{prob initiala}) has no classical
solution (see Proposition \ref{prop2.2} below), 
therefore we need a more general kind of solution for the above
nonlinear Dirichlet problem if the boundary data $\phi $ is not continuous.

The purpose of this paper is to present a method of solving equation $(\ref{prob initiala})$ for discontinuous boundary data.
Instead of the pointwise convergence we use A. Cornea's  controlled convergence   to the boundary data  (cf.  \cite{Cornea95} and \cite{Cornea98}).
It turns out that this type of convergence provides a way to describe the boundary behaviour of the solution to the boundary value  problems for general (not necessarily
continuous) boundary data and it was already used 
in the linear case
 for the Dirichlet problem on an Euclidean domain  (cf. \cite{Be11}) but also for the Dirichlet problem associated with the Gross-Laplace operator on an abstract Wiener space (in \cite{BeCoRo11}), and 
for the Neumann problem  on an Euclidean ball (see \cite{BePaPaControl}); see also the Remark \ref{rem2.1} below.

Our strategy  is to  modify the procedure of \cite{Chen93} for solving $(\ref{prob initiala})$,
since we have to work with spaces of discontinuous functions, in particular, the above mentioned compactness criteria are not more suitable. 
However, the imposed  additional hypothesis on the nonlinear term $F$ permits  us to use  Banach fixed point theorem. 
As a byproduct we prove an uniqueness result and a probabilistic representation of the solution to $(\ref{prob initiala})$, 
an approximation with stochastic terms, which might be considered  
an analogue of the stochastic solution to the linear Dirichlet problem. 
Note that in \cite{AAC} the authors mention an "implicit" probabilistic representation of the solution,
 in  the case when $F(\cdot ,u)=u^{p}$,  and $\phi$ a non-negative continuous  function on the boundary.

The structure of the paper is the following.
The controlled convergence is introduced at the begining of Section \ref{section2}. 
We expose  then the linear Dirichlet problem with general boundary data (Theorem \ref{thm2.5}), based on the controlled convergence, 
 improving essentially the result from \cite{Be11} which shows that that the stochastic solution solves the problem in this case;  we put its proof the Appendix of the paper.
 In Theorem \ref{unicitate}  we show the uniqueness of  the solution to the Dirichlet problem associated with the operator 
 $\frac{1}{2} \Delta +q$, it is  a generalization to discontinuous boundary data of a result from \cite{ChungZhao95} (Theorem 3.21).
The main result of the paper (Theorem \ref{thm3.1}), the existence of the solution to equation $(\ref{prob initiala})$, 
 and its proof are  presented in Section \ref{section3}. 
 The  probabilistic representation of the solution  is stated in Remark \ref{rem3.3}.
 We complete  the section  by proving  (in Theorem \ref{thm3.3}) the uniqueness of the solution to equation $(\ref{prob initiala})$.

The authors

\section{Solutions to the Dirichlet problem based on controlled convergence} \label{section2}

\noindent
{\bf Controlled convergence.} 
Let $f:\partial D\longrightarrow \overline{\mathbb{R}}$ and $%
h,k:D\longrightarrow {\mathbb{R}},$ $k\geq 0.$  
We say that \textit{$h$ converges to $f$
controlled by $k$} (we write $h\overset{k}{\longrightarrow }f$) provided that for every $A\subset D$ and $y\in \partial D\cap \overline{A}$ the following conditions hold:

\begin{enumerate}
\item[$\left( \ast \right) $] If $\underset{A\ni x\rightarrow y}{\lim \sup }$
$k\left( x\right) <\infty $ then $f\left( y\right) \in \mathbb{R}$ and $f\left(
y\right) =\underset{A\ni x\rightarrow y}{\lim }h\left( x\right) $

\item[$\left( \ast \ast \right) $] If $\underset{A\ni x\rightarrow y}{\lim }$
$k\left( x\right) =\infty $ then $\underset{A\ni x\rightarrow y}{\lim }\frac{%
h\left( x\right) }{1+k\left( x\right) }=0.$
\end{enumerate}
The function $k$ is called a {\it  control function} for $f$.

\begin{remark} \label{rem2.1} 
\begin{enumerate}
\item[(i)] This type of convergence was introduced by A. Cornea in \cite{Cornea95}
and \cite{Cornea98}. A first motivation was to solve the (linear) Dirichlet
problem with general (discontinuous) boundary data. 
The monograph \cite{Lukes}
presents the controlled convergence with application to the
Perron-Wiener-Brelot solution of a Dirichlet problem in the frame of a
harmonic space.

\item[(ii)] In \cite{BeCoRo11} the controlled convergence was used to state
and solve the Dirichlet problem for the Gross-Laplace operator on an
abstract Wiener space, for general (not necessarily continuous) boundary
data. 
Note that a main difficulty in proving the existence of the solution
is the construction of a convenient control function.

\item[(iii)] In \cite{BePaPafaracontrol} and \cite{BePaPaControl} the
controlled convergence was used to solve the Neumann problem on a ball with
discontinuous boundary data.

\item[(iv)] One can  see that  if $h_i$ converges controlled by $k_i$ to the real-valued function $f_i$, $i=1,2,$ and $\alpha \in  \mathbb{R}$, then
$h_1+ \alpha h_2$ converges controlled by $k_1+k_2$ to $f_1+ \alpha f_2$.

\end{enumerate}
\end{remark}

We denote by $b\mathcal{B}_+(A) $ the set of all bounded,
positive and Borel measurable functions on a set $A$, by $C(A)$ the set of all continuous functions on $A$, by $C_{b}(A)$ the set of all bounded and continuous functions on $A$, and by $\left\vert \left\vert
\cdot \right\vert \right\vert _{\infty }$ the supremum norm of a
real-valued function over its domain of definition, in particular $%
\left\vert \left\vert \phi \right\vert \right\vert _{\infty }=\underset{x\in
	\partial D}{\sup }\left\vert \phi \right\vert ,$ provided that $\phi $ is
defined on the boundary $\partial D$ of $D$. 

\begin{proposition}\label{prop2.2} 
\label{remark 1.1}If the nonlinear Dirichlet problem (\ref{prob initiala})
has a classical solution $u\in C\left( D\right) $ then the boundary data 
$\phi $ is continuous on $\partial D$.
\end{proposition}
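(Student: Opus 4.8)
The plan is to prove directly that $\phi$ is continuous at each boundary point, relying only on the pointwise boundary-limit condition in (\ref{prob initiala}); it is worth noting at the outset that neither the differential equation nor even the continuity of $u$ on $D$ enters the argument. The crucial structural fact I would use is that, since $D$ is open, every $y\in\partial D=\overline{D}\setminus D$ is a cluster point of $D$, so each boundary point is approachable by interior points and the hypothesis $\lim_{D\ni x\to y}u(x)=\phi(y)$ genuinely pins $\phi(y)$ down as a limit of interior values of $u$.

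First I would fix $y_0\in\partial D$ together with an arbitrary sequence $(y_n)_n\subset\partial D$ with $y_n\to y_0$, the goal being $\phi(y_n)\to\phi(y_0)$. For each $n$, the existence of the boundary limit at $y_n$, namely $\lim_{D\ni x\to y_n}u(x)=\phi(y_n)$, allows me to select a point $x_n\in D$ satisfying $|x_n-y_n|<1/n$ and $|u(x_n)-\phi(y_n)|<1/n$; such a selection is legitimate precisely because every neighbourhood of $y_n\in\partial D$ meets $D$. Next I would observe that $|x_n-y_0|\le|x_n-y_n|+|y_n-y_0|\to 0$, so $(x_n)_n$ is a sequence in $D$ converging to $y_0$. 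Applying the boundary condition now at $y_0$ yields $u(x_n)\to\phi(y_0)$, and combining this with $|u(x_n)-\phi(y_n)|<1/n$ gives $\phi(y_n)=u(x_n)+\bigl(\phi(y_n)-u(x_n)\bigr)\to\phi(y_0)$, which is exactly the sought continuity of $\phi$ at $y_0$. Since $y_0$ was arbitrary, $\phi\in C(\partial D)$.

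The argument is a straightforward diagonal approximation and I do not expect a serious obstacle. The only point demanding care is the existence of the interior approximants $x_n$, which rests on the openness of $D$ rather than on any smoothness or regularity of $\partial D$. I would emphasize in the write-up that the conclusion holds for \emph{any} function $u$ on $D$ possessing full pointwise boundary limits, quite independently of the semilinear structure of (\ref{prob initiala}); this is precisely what makes a discontinuous $\phi$ incompatible with the existence of a classical solution, and thereby motivates replacing pointwise convergence by the controlled convergence used in the remainder of the paper.
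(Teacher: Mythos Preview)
Your argument is correct: the diagonal selection of interior approximants $x_n\in D$ close to $y_n$ with $u(x_n)$ close to $\phi(y_n)$, followed by an application of the boundary-limit hypothesis at $y_0$, is exactly the right idea, and your observation that neither the equation nor the interior continuity of $u$ is needed is accurate.

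The paper does not supply its own proof here; it simply refers the reader to Remark~1.1 of \cite{Be11}. Your write-up therefore goes beyond what the paper does, providing a self-contained elementary argument in place of a bare citation. The content is presumably the same as (or equivalent to) what is in \cite{Be11}, so there is no genuine methodological difference to compare---you have just filled in what the authors chose to outsource.
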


\begin{proof}
See the proof of Remark 1.1 from  \cite{Be11}.
\end{proof}

\medskip

According to Proposition \ref{remark 1.1} if the boundary data $\phi $ is
discontinuous then the problem (\ref{prob initiala}) has no classical
solution. Below we introduce a more general solution to our problem (\ref%
{prob initiala}).

A function $u\in C\left( D\right) $ is called a \textit{ weak solution} to the nonlinear Dirichlet problem with boundary data 
$\phi \in b\mathcal{B}_+\left( \partial D\right) $ associated with the operator $\frac{1}{2}%
\Delta u-F\left( \cdot ,u\right),$ 
provided that there exists a control
function $k$ which is superharmonic in $D$  such that 
\begin{equation*}
\left\{ 
\begin{array}{l}
\frac{1}{2}\Delta u-F\left( \cdot ,u\right) =0\text{ on }D,~\text{in the
weak sense} \\ [2mm]
u\text{ converges to }\phi \text{ controlled by }k.%
\end{array}%
\right. 
\end{equation*}

\subsection{The linear case}
A solution to the {\it classical Dirichlet problem on $D$ with boundary data} $f : \partial D \longrightarrow \mathbb{R}$ 
is a harmonic function $h : D \longrightarrow \mathbb{R}$ such that
$\mathop{\lim}\limits_{\mathop{x \to y}\limits_{x \in D}} h(x) = f(y) \, \mbox{ for all }  y \in \partial D.$ \\

For $x\in \mathbb{R}$  let $
\left( X\left( t\right) ,t\geq 0\right) $ under the probability $\mathbb{P}^x$ 
be the $d$-dimensional Brownian motion starting from $x$, 
 denote by $\mathbb{E}^{x}$ the
expectation under $\mathbb{P}^{x}$, 
and let $\tau _{D}=\inf \{t>0:$ $X\left(
t\right) \notin D\}$ be the hitting time of $D^{c}$. 

If $ f: \partial D \longrightarrow \mathbb{R}$ is a  bounded below Borel measurable and $x\in D$ define
$$
H_D f(x) := \mathbb{E}^x f (X({\tau_{D})}) .
$$
By  Theorem 3.7 from \cite{Port-Stone}, page 106,  
it follows that $H_D f$ is a (real-valued) harmonic functions on $D$ provided that $H_D f$ is not equal $+\infty$ everywhere on $D$.
In addition, if $f\in C(\partial D)$ then $H_D f$ is the solution to the classical Dirichlet problem on $D$ with boundary data $f$.
Therefor  $H_D f$ is called {\it the stochastic solution to the Dirichlet problem}.


\noindent
{\bf The Dirichlet problem based on controlled convergence}. 
A function  $f : \partial D \longrightarrow  \overline{\mathbb{R}}$
is called {\it resolutive} provided that there exists a harmonic function $ h$  on $D$ which converges to $f$  controlled by a real-valued, non-negative superharmonic function $k$.  
The unique function $h$ (see Corollary \ref{cor2.4} below) is called the {\it solution on $D$ to the Dirichlet problem with boundary data} $f$.

\begin{remark} \label{rem4.1}  
$(i)$ According to \cite{Cornea95} and \cite{Cornea98}, and as mentioned in the Introduction, 
the controlled convergence offers a method for setting and solving 
the Dirichlet problem for general open sets and general boundary data. 
The above function $f$ should be interpreted as being the boundary data of the harmonic function $h$.

$(ii)$ A harmonic function $h$ on $D$ is the solution to the classical  Dirichlet problem with boundary data $f$ 
if and only if $  h$ converges to $f$ controlled by a bounded function $k$.
\end{remark}

The next corollary is a version of Corollary 4.3 from \cite{Be11}.

\begin{corollary} \label{cor2.4} 
If the  Dirichlet problem has a solution then it is unique. 
In particular, if $u$ is  a harmonic function on $D$ which converges controlled by $k$ to zero then $u=0$ on $D$.
\end{corollary}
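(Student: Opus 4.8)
The plan is to reduce the general uniqueness statement to the displayed ``in particular'' assertion, and then to prove the latter by a minimum-principle argument adapted to the control function. For the reduction, suppose $h_1$ and $h_2$ are both solutions, converging to the same boundary datum $f$ controlled by real-valued, non-negative superharmonic functions $k_1$ and $k_2$. By Remark \ref{rem2.1}(iv) (applied with $\alpha=-1$) the harmonic function $u:=h_1-h_2$ converges to $f-f=0$ controlled by $k:=k_1+k_2$, which is again real-valued, non-negative and superharmonic. Hence it suffices to show that any harmonic $u$ on $D$ with $u\overset{k}{\longrightarrow}0$ for some such $k$ vanishes identically.

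Fix $\varepsilon>0$ and set $s_\varepsilon:=\varepsilon k-u$. Since $u$ is harmonic and $\varepsilon k$ is superharmonic, $s_\varepsilon$ is a real-valued, lower semicontinuous superharmonic function on $D$. The key step is to verify that
\[
\liminf_{D\ni x\to y} s_\varepsilon(x)\ge 0 \qquad \text{for every } y\in\partial D .
\]
I would argue this by contradiction: if some sequence $x_n\to y$ in $D$ had $s_\varepsilon(x_n)\to L<0$, I would pass to a subsequence along which $k(x_n)$ converges in $[0,+\infty]$ (possible since $k\ge 0$) and take $A$ to be the set of points of that subsequence. If the limit is finite, condition $(\ast)$ applied to this $A$ forces $u(x_n)\to 0$, whence $s_\varepsilon(x_n)\to\varepsilon\lim k(x_n)\ge 0$; if the limit is $+\infty$, condition $(\ast\ast)$ gives $u(x_n)/(1+k(x_n))\to 0$, hence $u(x_n)/k(x_n)\to 0$ and $s_\varepsilon(x_n)=k(x_n)\,[\varepsilon-u(x_n)/k(x_n)]\to+\infty$. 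Either way $L\ge 0$, a contradiction.

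With this boundary estimate in hand the minimum principle finishes the argument. The liminf condition yields, by compactness of $\partial D$, a neighbourhood of $\partial D$ on which $s_\varepsilon\ge -1$, while on the complementary compact set $s_\varepsilon$ is bounded below by lower semicontinuity; thus $s_\varepsilon$ is bounded below on $D$. As $D$ is regular there are no irregular boundary points to discard, so the minimum principle for superharmonic functions gives $s_\varepsilon\ge 0$ on $D$, i.e. $u\le\varepsilon k$. Applying the same reasoning to $-u$ (which converges to $0$ controlled by $k$ as well, again by Remark \ref{rem2.1}(iv)) gives $-u\le\varepsilon k$, so $|u|\le\varepsilon k$ on $D$. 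Since $k$ is finite everywhere, letting $\varepsilon\downarrow 0$ forces $u\equiv 0$ on $D$, which proves both the claim and, via the reduction, the uniqueness statement.

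The step I expect to be the main obstacle is the boundary estimate: for a fixed approach to $y$ the control $k$ may oscillate, so that neither $(\ast)$ nor $(\ast\ast)$ applies to $A=D$ directly. This is precisely what the quantification over all subsets $A$ in the definition of controlled convergence is designed to handle, and the passage to a subsequence along which $k$ has a limit in $[0,+\infty]$ is the device that exploits it. The remaining ingredients (superharmonicity of $s_\varepsilon$, lower-boundedness from the liminf condition, and the minimum principle on the regular domain $D$) are standard.
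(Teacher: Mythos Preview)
Your argument is correct. The paper's own proof is essentially a pointer: it invokes Corollary~4.3 of \cite{Be11} and adds only that, by Proposition~2.2 of \cite{Cornea98}, the control function may be taken continuous. Your proof is a self-contained realisation of the same underlying idea---compare $u$ with $\varepsilon k$ and apply the minimum principle---but you avoid the continuity reduction entirely: lower semicontinuity of $k$, automatic for superharmonic functions, already makes $s_\varepsilon=\varepsilon k-u$ superharmonic and suffices for the compactness/boundedness step. The subsequence device you highlight (forcing $k(x_n)$ to converge in $[0,+\infty]$ so that exactly one of $(\ast)$, $(\ast\ast)$ applies to the chosen $A$) is precisely the place where the quantification over all $A\subset D$ in Cornea's definition is exploited, and you handle it cleanly. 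One small remark: the appeal to regularity of $D$ is not needed, since you have established the liminf condition at \emph{every} boundary point and the minimum principle then applies without exceptional sets.
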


\begin{proof}
We can argue as in the proof of Corollary 4.3 from \cite{Be11}. 
The only additional argument we need is the fact that in our case here we may assume that the control functions are continuous, according to Proposition 2.2 from \cite{Cornea98}.
\end{proof}

\vspace{2mm}

The next  result  shows that the stochastic solution solves the Dirichlet problem with general boundary data.
It is an improvement of the main result from \cite{Be11}, Theorem 4.8;
for the relation with the resolutivity for the method of Perron-Wiener-Brelot see Corollary 2.13 from \cite{Cornea98}.
Some arguments in the proof are like in the proofs of  Theorem 4.8 from \cite{Be11} and
Theorem 5.3 from \cite{BeCoRo11}. 
For the reader convenience we present in the Appendix the proof of the next theorem.

\begin{theorem} \label{thm2.5} 
Let $D \subset \mathbb{R}^d$ be a bounded domain such that the classical Dirichlet problem has a solution.
Let $f \in \cb_+(\partial D)$ and assume that $H_D f$ is not equal $+\infty$ everywhere on $D$.
Then $H_D f$
is the unique solution to the 
Dirichlet problem with boundary data $f$.
More precisely, there exists  $g \in \mathcal{B}_+(\partial D)$ such that the function 
$k := H_D g$ is real-valued 
and  $H_D f$  converges to $f$ controlled by $k$. 
\end{theorem}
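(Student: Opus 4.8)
The plan is to separate uniqueness from existence. Uniqueness is immediate from the results already available: if two harmonic functions $h_1,h_2$ on $D$ both converge to $f$, controlled by $k_1$ and $k_2$ respectively, then by Remark \ref{rem2.1}(iv) the difference $h_1-h_2$ converges to $0$ controlled by $k_1+k_2$, and Corollary \ref{cor2.4} forces $h_1=h_2$. It therefore remains to exhibit one solution, and the natural candidate is the stochastic solution $H_D f$ itself: it is harmonic on $D$ by the Port--Stone theorem \cite{Port-Stone} (here the hypothesis that $H_D f\not\equiv+\infty$ is used), and it is bounded, with $0\le H_D f\le\|f\|_\infty$.

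Boundedness of $H_D f$ is the key simplification. Indeed, whenever $k(x)\to\infty$ one has $H_D f(x)/(1+k(x))\to 0$, so condition $(\ast\ast)$ in the definition of controlled convergence is automatic. The whole problem thus reduces to producing a single $g\in\mathcal{B}_+(\partial D)$ with $k:=H_D g$ real-valued such that $(\ast)$ holds, i.e. $\lim_{A\ni x\to y}H_D f(x)=f(y)$ whenever $\limsup_{A\ni x\to y}k(x)<\infty$. I would construct $g$ by approximation. Since $\partial D$ is compact metric and $f$ is bounded Borel, I choose $f_n\in C_+(\partial D)$ (truncating to keep them nonnegative and bounded) with $f_n\to f$ in $L^1(\mu_{x_0})$, where $\mu_{x_0}=H_D(\cdot)(x_0)$ is the harmonic measure at a fixed $x_0\in D$, passing to a subsequence so that $\sum_n 2^n\|f_{n+1}-f_n\|_{L^1(\mu_{x_0})}<\infty$. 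Setting $v_n:=|f_{n+1}-f_n|\in C_+(\partial D)$ and $g:=\sum_n 2^n v_n$, one gets $\int g\,d\mu_{x_0}<\infty$, so $H_D g(x_0)<\infty$; a positive harmonic function finite at one point is real-valued on $D$ (again by \cite{Port-Stone}), and $k=\sum_n 2^n H_D v_n$ by monotone convergence.

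To verify $(\ast)$, fix $A$ and $y\in\partial D\cap\overline{A}$ with $\limsup_{A\ni x\to y}k(x)=M<\infty$. For each fixed $N$ the partial sum $\sum_{n\le N}2^n H_D v_n$ has continuous boundary data, and since the classical Dirichlet problem is solvable every boundary point is regular, so this partial sum tends to $\sum_{n\le N}2^n v_n(y)$ as $x\to y$; comparing with $k$ gives $\sum_{n\le N}2^n v_n(y)\le M$ for all $N$, hence $g(y)\le M$ and in particular $f_n(y)$ converges. Using the triangle inequality $|H_D f(x)-f(y)|\le H_D|f-f_n|(x)+|H_D f_n(x)-f_n(y)|+|f_n(y)-f(y)|$, the middle term tends to $0$ as $A\ni x\to y$ by regularity and continuity of $f_n$, while the first term is dominated by $2^{-n}k(x)$ (from $|f-f_n|\le\sum_{j\ge n}v_j$ at points where $f_n\to f$), whose $\limsup$ along $A$ is at most $2^{-n}M$. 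Letting $x\to y$ first and then $n\to\infty$ yields $\lim_{A\ni x\to y}H_D f(x)=f(y)$.

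The main obstacle, already flagged in Remark \ref{rem2.1}(ii), is precisely securing the pointwise estimate $|f-f_n|\le\sum_{j\ge n}v_j$ at every boundary point relevant to $(\ast)$ — equivalently, arranging the approximation so that at each $y$ with $g(y)<\infty$ one has $\lim_n f_n(y)=f(y)$. Plain $L^1(\mu_{x_0})$-approximation delivers this only $\mu_{x_0}$-almost everywhere, and the points where $f_n$ converges to a value different from $f(y)$ form a harmonic-measure-null set on which $k$ need not blow up, so the naive $g$ above may fail $(\ast)$ there. Overcoming this requires a more careful choice of the approximating sequence (hence of $g$), so that the finiteness locus of $k$ is contained in the set where $H_D f$ genuinely converges to $f$; here I would follow the constructions of A. Cornea and of \cite{Be11,BeCoRo11}, exploiting that the control functions may be taken continuous (Proposition 2.2 of \cite{Cornea98}, as already used for Corollary \ref{cor2.4}). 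This construction of a convenient control function, rather than the two triangle-inequality estimates, is where the real work lies.
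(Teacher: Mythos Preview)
Your uniqueness argument matches the paper's, and your identification of $H_D f$ as the candidate solution is correct. However, the existence part is left incomplete precisely at the point you flag: with $f_n\to f$ only in $L^1(\mu_{x_0})$, the pointwise identity $\lim_n f_n(y)=f(y)$ holds only $\mu_{x_0}$-a.e., and your control $k=H_D\big(\sum_n 2^n |f_{n+1}-f_n|\big)$ has no reason to blow up on the exceptional null set. You then defer the ``real work'' to the constructions in \cite{Be11,BeCoRo11,Cornea98} without saying what those constructions are; as written, the proposal is a sketch that stops short of the main step. (There is also a minor issue: the statement allows $f\in\mathcal{B}_+(\partial D)$, not $b\mathcal{B}_+(\partial D)$, so $H_D f$ need not be bounded and your reduction of $(\ast\ast)$ to a triviality is not justified in general.)

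The paper sidesteps your obstacle entirely by replacing $L^1$-approximation with a monotone class argument. Fix $x_0$ with $H_D f(x_0)<\infty$, let $\sigma$ be harmonic measure at $x_0$, and let $\mathcal{M}$ be the set of $\varphi\in L^1_+(\partial D,\sigma)$ for which the conclusion holds with some control $k=H_D g$, $k(x_0)<\infty$. Regularity gives $C_+(\partial D)\subset\mathcal{M}$. The key lemma is closure of $\mathcal{M}$ under \emph{pointwise increasing} limits $\varphi_n\nearrow\varphi$ in $L^1(\sigma)$: normalize the given controls to $k_n(x_0)=1$, set $k_o:=\sum_n 2^{-n}k_n$, pass to a subsequence so that $l:=\sum_n(H_D\varphi-H_D\varphi_n)$ is finite at $x_0$ (hence $l=H_D g$ for some $g$), and invoke Proposition~1.7 of \cite{Cornea98} to conclude $H_D\varphi\overset{k_o+l}{\longrightarrow}\varphi$. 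The monotone class theorem then yields $b\mathcal{B}_+(\partial D)\subset\mathcal{M}$, and one more increasing-limit step gives all of $L^1_+(\partial D,\sigma)$. The point is that monotone pointwise limits converge \emph{everywhere}, so the a.e.\ defect that blocks your direct approach never appears; the role of your telescoping control $\sum 2^n v_n$ is played instead by the series $l=\sum_n(h-h_n)$ together with Cornea's proposition.
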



Let $g:\mathbb{R}^d \longrightarrow \mathbb{R}\cup \{\infty \}$ be "the Green function" on $\mathbb{R}^d$, 
\[
g\left( u\right) =\left\{ 
\begin{array}{c}
\left\vert u\right\vert ^{d-2}, \\[2mm] 
\ln \frac{1}{\left\vert u\right\vert }, \\[2mm] 
\hspace*{-4mm} \left\vert u\right\vert ,%
\end{array}%
\right. 
\begin{array}{c}
\text{if }d\geq 3 \\[2mm]
\text{if }d=2\\[2mm]
\text{if }d=1. %
\end{array}%
\]%

\noindent
{\bf The Kato class.} 
Following \cite{ChungZhao95} we define the Kato class $J$ of the function $g$: 
the set of all real-valued Borel measurable functions $q$ defined on $\mathbb{R}^{d}$ such that
\[
\  \lim_{\alpha \searrow 0}\left[ \sup_{x\in \mathbb{R}^{d}}\int_{\left\vert
y-x\right\vert \leq \alpha }\left\vert g\left( y-x\right) q\left( y\right)
\right\vert dy\right] =0.
\]
If the function $q$ is only defined on the domain $D$,  
then we extend it to $\mathbb{R}^{d}$ by setting it to vanish on the complement of $D$.\\

\noindent
\textbf{Green-tight functions}. 
A function $w:D\longrightarrow \mathbb{R}$ is said to be\textit{\ Green-tight }%
on a bounded domain $D$ of $\mathbb{R}^{d}$ 
provided that it is Borel
measurable and such that 
\begin{equation*}
	\lim_{\substack{ \lambda \left( A\right) \rightarrow 0,  \\ A\subset D}}%
	\{\sup_{x\in D}\int_{A}\frac{|w\left( y\right) |}{|x-y|^{d-2}}dy\}=0,
\end{equation*}%
where $\lambda $ denotes the Lebesgue measure on $\mathbb{R}^{d}$. If $w$ is
Green-tight on $D$ then it satisfies 
\begin{equation*}
	\left\vert \left\vert w\right\vert \right\vert _{D}:=\sup_{x\in D}\int_{D}%
	\frac{\left\vert w\left( y\right) \right\vert }{\left\vert x-y\right\vert
		^{d-2}}dy<\infty .
\end{equation*}

Let $G$ be the Green function of the operator $\frac{1}{2}\Delta $ on $D.$
Then by (1.5) and (1.6) from \cite{Chen93}, 
\begin{eqnarray*}
	G\left( x,y\right) &=&0\text{ for all }x\in \partial D,\text{ }y\in D\text{
		and} \\
	0 &\leq &G\left( x,y\right) \leq c\left\vert x-y\right\vert ^{2-d}\text{ for
		all }x,y\in D,  \notag
\end{eqnarray*}%
with $c:=$ $\Gamma \left( \frac{d}{2}-1\right) /2\pi ^{d/2}.$

As in (1.9) from \cite{Chen93} and Theorem 3.2 of \cite{ChungZhao95} for any Green-tight function $q$ on $D,$%
\begin{equation*}
	Gq ( x ) :=\int_{D}G\left( x,y\right) q(y) dy,\text{ }x\in \overline{D}, 
\end{equation*}%
defines a bounded and continuous function on $\overline{D}$ such that 
\begin{equation}
		\lim_{x\rightarrow z}Gq\left( x\right) =0  \label{Gq=0} \ \mbox{ for any  } z\in\partial D.
\end{equation}
Furthermore, by Proposition 2.10 from \cite{ChungZhao95} $Gq$
satisfies the equation 
\begin{equation}
	\frac{1}{2}\Delta \left( Gq\right) =-q\text{ on }D  \label{ecuatia cu G}
\end{equation}%
in the weak sense, that is for every test function $\psi \in C_{c}^{\infty
}\left( D\right) :=\{f\in C^{\infty }\left( D\right) :$ $f$ has compact
support in $D\} $ $Gq$ satisfies the equation 
\begin{equation*}
	\frac{1}{2}\int_{D}Gq(x)\Delta \psi \left( x\right) dx=-\int_{D}q\left(
	x\right) \psi \left( x\right) dx.
\end{equation*}


\begin{remark} \label{rem2.6}  
According to \cite{Chen93},  a real-valued Borel measurable function $w$ on $D$ is Green-tight if and only if $1_{D}w\in J$.
\end{remark}


\subsection{Uniqueness of the solution  for the operator $\frac{1}{2} \Delta +q$}. 

For $q\in J$ let $\{T_{t}\}_{t}$ be the Feynman-Kac semigroup on $D$ associated with the
multiplicative functional 
$ \left(  e^{ \int_{0}^{t} q ( X( s) ) ds } \right)_{t\geq 0}$,  
\[
T_{t}f\left( x\right) =\mathbb{E}^{x} (  e^{\int_{0}^{t}q( X( s) )  ds } f ( X( t) ); t<\tau _{D}), \ x \in D.
\]%
Consider  further the potential operator of the semigroup $\{T_{t}\}_{t}$, 
\[
Vf\left( x\right) =\int_{0}^{\infty }T_{t}f\left( x\right)
dt=
\mathbb{E}^{x} \int_{0}^{\tau _{D}}
e^{\int_{0}^{t}q( X( s) )  ds} 
f( X(t) )dt 
\ \text{ for } f\in \mathcal{B}_{+} (D) .
\]

Let $q\in J$ and $\phi \in b\cb_+ ( \partial D)$.  
A function $u\in C\left( D\right) $ is called 
 \textit{weak solution }%
to the Dirichlet problem associated with the operator 
$\frac{1}{2} \Delta +q$,  
with boundary data $\phi $, provided that there exists a 
superharmonic control function $k:D\longrightarrow 
\mathbb{R}_{+}$ such that
\[
\left\{ 
\begin{array}{c}
\frac{1}{2}\Delta u+qu=0\text{ on }D\text{ in the weak sense} \\[2mm]
\hspace*{-8mm} u~\text{converges to }\phi \text{ controlled by }k.%
\end{array}%
\right. 
\]

\begin{theorem} \label{unicitate} 
Let $q\in J$ and  $\phi \in b\cb_+( \partial D )$.
If  the linear Dirichlet problem associated with the operator 
$\frac{1}{2} \Delta +q$,  
with boundary data $\phi $,  has a  weak solution in $C_{b}\left( D\right) $ then it is unique.
\end{theorem}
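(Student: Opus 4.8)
The plan is to reduce, by linearity and the controlled-convergence calculus, to the statement that the only weak solution with zero boundary data vanishes, and then to identify that solution with a Feynman--Kac potential which is forced to be $0$.

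First I would take two weak solutions $u_1,u_2\in C_b(D)$, with superharmonic control functions $k_1,k_2$, and set $w:=u_1-u_2$. Then $\frac12\Delta w+qw=0$ on $D$ in the weak sense, and by Remark \ref{rem2.1}(iv) (applied with $\alpha=-1$) the function $w$ converges to $\phi-\phi=0$ controlled by $k:=k_1+k_2$, which is again non-negative, real-valued and superharmonic. The next step is to split off the harmonic part of $w$. Since $q\in J$, the function $q$ is Green-tight on $D$ (Remark \ref{rem2.6}), and as $w$ is bounded the product $qw$ is Green-tight as well; hence $G(qw)$ is well defined, bounded and continuous on $\overline D$, it satisfies $\frac12\Delta\bigl(G(qw)\bigr)=-qw$ in the weak sense by (\ref{ecuatia cu G}), and $\lim_{x\to z}G(qw)(x)=0$ for every $z\in\partial D$ by (\ref{Gq=0}). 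Consequently $h:=w-G(qw)$ is weakly harmonic, hence harmonic, on $D$.

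Now I would track the boundary behaviour of $h$. Because $G(qw)$ tends classically to $0$ at every boundary point, it converges to $0$ controlled by any positive constant function $c$ (condition $(\ast\ast)$ being vacuous for a bounded control), so by Remark \ref{rem2.1}(iv) once more $h=w-G(qw)$ converges to $0$ controlled by the non-negative, real-valued, superharmonic function $k+c$. Corollary \ref{cor2.4} then forces $h=0$, that is, $w=G(qw)$ on $D$.

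It remains to deduce $w=0$ from this fixed-point identity, and this is the step I expect to be the main obstacle: for a general $q\in J$ the operator $f\mapsto G(qf)$ need not be a contraction, and $\frac12\Delta+q$ may a priori carry a nontrivial kernel, so no elementary maximum principle applies. Here I would invoke the Feynman--Kac potential $V$ introduced above together with the Khasminskii/resolvent identity $Vf=Gf+V(qGf)$, valid for $q\in J$ in the Chung--Zhao framework that underlies the result of \cite{ChungZhao95} being generalized. Applying it to $f:=qw$ and using $G(qw)=w$, hence $q\,G(qw)=qw$, gives $V(qw)=w+V(qw)$. The genuinely delicate point is the finiteness of $V|q|$ (gaugeability), which is exactly what the $q\in J$ hypothesis secures in this setting; it permits cancelling $V(qw)$ and yields $w=0$. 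Thus $u_1=u_2$ and the weak solution is unique.
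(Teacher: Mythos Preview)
Your proposal follows essentially the same route as the paper: form the difference $w=u_1-u_2$, split off $G(qw)$ to isolate a harmonic function with controlled boundary value $0$, annihilate it via Corollary~\ref{cor2.4} so that $w=G(qw)$, and then apply the resolvent identity $V(qw)=G(qw)+V\bigl(qG(qw)\bigr)$ from \cite{ChungZhao95} together with the finiteness of $V(|qw|)$ to cancel and conclude $w=0$. The only cosmetic differences are that the paper controls the harmonic remainder directly by $k_1+k_2$ (since $G(qw)\to 0$ classically at $\partial D$) rather than by $k+c$, and it justifies the needed finiteness via the explicit bound $V1(x)\le\mathbb{E}^{x}[\tau_D]\le R^2/d$ and Theorem~3.18 of \cite{ChungZhao95} rather than by invoking gaugeability abstractly.
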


\begin{proof}
Let $u_{1},$ $u_{2}\in C_{b}\left( D\right) $ be two weak solutions
to the linear Dirichlet problem associated with the operator 
$\frac{1}{2} \Delta +q$,
with $k_{1}$ and, respectively, $k_{2},$ their
control functions, that is,  
$u_{1}\overset{k_{1}}{\longrightarrow }\phi $ 
and $u_{2}\overset{k_{2}}{\longrightarrow }\phi ,$ where $%
k_{1},k_{2}:D\rightarrow \mathbb{R}_{+}$ are two positive superharmonic functions. Let $v:=u_{1}-u_{2}$. 
By assertion $(iv)$ of Remark \ref{rem2.1} 
we have
$ \frac{1}{2}\Delta v+qv = 0$ in the weak sense on $D$ 
and 
$v\overset{k}{\longrightarrow }0,$
with $k=k_{1}+k_{2}.$ 
We use now some arguments from the proof of Theorem 3.21 from  \cite{ChungZhao95}. 
Let $f:=v-G\left( qv\right)$.
Then  
$v$ is 
bounded and since $q\in J$ we have that $1_{D}qv\in J\cap L^1(D)$, using also Proposition 3.1 from  \cite{ChungZhao95}.
By
Theorem 3.2 from \cite{ChungZhao95} we conclude that  $G\left( \left\vert qv\right\vert \right) 
$ is bounded in $D$ and so it belongs to $L^{1}\left( D\right) $. 
Therefore, by Proposition 2.10 from  \cite{ChungZhao95} we have 
\[
\frac{1}{2}\Delta G\left( qv\right) =-qv\text{ on }D\text{ in the weak sense,%
}
\]%
hence
$\Delta f=\Delta v-\Delta G\left( qv\right) =-2qv+2qv=0\text{ on }D$  in
the weak sense.
Again by Theorem 3.2 from  \cite{ChungZhao95} we get that $G\left( qv\right) \in C_{0}\left(
D\right) $, therefore $f\in C \left( D\right)$.  
Thus by Weyl's lemma  (see e.g. \cite{Daco}, page 118) $f$
is harmonic on $D$. 
We have also 
$\lim_{x\rightarrow z}G\left( qv\right) \left( x\right) =0\text{ for all }%
z\in \partial D,$
and since $v=u_{1}-u_{2}\overset{k_{1}+k_{2}}{\longrightarrow }0,$ we get 
that $f\overset{k_{1}+k_{2}}{\longrightarrow }0.$
Thus $f$ is a solution to the Dirichlet problem%
\begin{equation}
\left\{ 
\begin{array}{c}
\hspace*{-46mm} \Delta w=0\text{ on }D \\[3mm] 
w\text{ converges to }0\text{ controlled by } k_1+k_2. %
\end{array}%
\right.   \label{Dirichlet liniara 1}
\end{equation}%
By Corollary \ref{cor2.4} it follows that
$f=0$ on  $D$,
hence $v=G\left( qv\right) $ on $D.$ 
Further, we argue as in the last part of the  proof of Theorem 3.21 from  \cite{ChungZhao95}.
By
Theorem 3.2 from \cite{ChungZhao95} we have that $G|q|$ is bounded in $D$, hence 
$G\left\vert qv\right\vert \leq \left\vert \left\vert v\right\vert \right\vert
_{\infty }\left\vert \left\vert G|q|\right\vert \right\vert _{\infty }<\infty $ 
and therefore $V\left( \left\vert q\right\vert G\left\vert qv\right\vert
\right) <\infty $ by Theorem 3.18 from \cite{ChungZhao95}. 
Note that $V$ is a bounded kernel 
because 
$V1(x)\leq \mathbb{E}^x [\tau_D]\leq \frac{R^2}{d}$, 
where $B$ is a ball of radius $R$ centered at the origin, containing $D$; 
see $(\ref{eq3.6})$ below for more details on the proof of the last inequality.
Thus, by (45)  from \cite{ChungZhao95} 
we obtain 
$V\left( qv\right) =G\left( qv\right) +V\left( qG\left( qv\right) \right) $, 
hence 
$V\left( qv\right) =v+V\left( qv\right)$ on  $D$.
Since $V\left( qv\right) <\infty ,$ we get $v=0$ on $D,$ therefore $%
u_{1}=u_{2}$ on $D$.
\end{proof}

\section{The nonlinear Dirichlet problem}\label{section3}. 

As in \cite{Chen93}, (3.8)-(3.10), we fix a positive Green-tight function $U$
on $D$ such that $0\leq F\left( x,u\right) \leq U\left( x\right) u$ for all $%
(x,u)\in D\times \left( 0,b\right) $ for some $b\in (0,\infty ]$.
Assume that $\| \phi \|_{\infty }< b$ and
let 
\begin{equation*}
\gamma _{0}:=\inf \{\phi \left( x\right) :x\in \partial D\},\text{ }\beta
:=c\left\vert \left\vert U\right\vert \right\vert _{D},
\end{equation*}%
and 
\begin{equation}
\Lambda :=\{u\in b\mathcal{B} ( D) : m:=e^{-\beta }\gamma _{0}\leq
u\leq \left\vert \left\vert \phi \right\vert \right\vert _{\infty }=:%
\widetilde{m}\text{ on }D\}.  \label{lambda}
\end{equation}%
We endow $\Lambda $ with the metric induced by the supremum norm and clearly we obtain a complete metric space. 

If $\gamma _{0}>0$, since $m=e^{-\beta }\gamma _{0}>0$, we may define for every $x\in D$ the function 
$H_{x}:[m,\widetilde{m}]\longrightarrow \lbrack 0,\infty )$ as
$$
H_{x}\left( y\right) :=\frac{F\left( x,y\right) }{y}, \ y\in [m,\widetilde{m}].
$$

We can state now the main result of this paper. 
Consider a ball  $B$ of radius $R$  centered at the origin, containing $D$.

\begin{theorem} \label{thm3.1} 
\label{teorema} 
Let $\phi >0$ be a bounded and Borel measurable function on $%
\partial D$ such that $\gamma _{0}>0$. 
Assume that for every $x\in D$ the
function $H_{x}$ is Lipschitz continuous on $[m,\widetilde{m}]$ with the
constant $C$ that does not depend on $x$ and let $\phi $ be such that 
\begin{equation}
\left\vert \left\vert \phi \right\vert \right\vert _{\infty }<\frac{d}{R^{2}C%
}.  \label{conditie2}
\end{equation}%
Then the nonlinear Dirichlet problem with boundary
data $\phi $ associated with  the operator $u\longmapsto \frac{1}{2}\Delta
u-F\left( \cdot ,u\right) $ has a weak solution $u\in C\left(
D\right)$,  that is, %
\begin{equation}
\left\{ 
\begin{array}{l}
\frac{1}{2}\Delta u-F\left( \cdot ,u\right) =0\text{ on }D \text{ in the
weak sense} \\[3mm]
u\text{ converges to }\phi \text{ controlled by }k,  
\end{array}%
\right.   \label{prob1}
\end{equation}
where the control function is  $k:= H_D g$ 
for some function $g\in $ $\mathcal{B}_+ ( D) $.
\end{theorem}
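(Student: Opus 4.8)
The plan is to solve \eqref{prob1} by a fixed-point argument for a map built from the Feynman-Kac representation, exploiting the hypothesis that $H_x$ is Lipschitz uniformly in $x$ in order to apply the Banach fixed point theorem on the complete metric space $\Lambda$. For $v\in\Lambda$ put $q_v:=-H_{\cdot}(v(\cdot))$, so that $0\le -q_v\le U$ on $D$; since $U$ is Green-tight, $1_D q_v\in J$ by Remark \ref{rem2.6}, and I define
\[
\Phi(v)(x):=\mathbb{E}^x\Big(e^{-\int_0^{\tau_D}H_{X(s)}(v(X(s)))\,ds}\,\phi(X(\tau_D))\Big),\qquad x\in D,
\]
i.e.\ $\Phi(v)$ is the Feynman-Kac solution of the linear problem $\tfrac12\Delta w+q_v w=0$ with boundary data $\phi$. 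The reason for the exponential (rather than a Green-operator) form is that it absorbs the size of $U$ into a factor $\le 1$, so that only the Lipschitz constant $C$ of $H_x$ enters the contraction estimate; the bound $\|\phi\|_\infty<d/R^2C$ is then precisely what is needed. Note that $H_{X(s)}(v(X(s)))$ is well defined because $v(X(s))\in[m,\widetilde m]$ and $\gamma_0>0$ forces $m>0$.

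First I would verify $\Phi(\Lambda)\subset\Lambda$. As the exponent is nonpositive and $\phi\le\widetilde m$, one has $\Phi(v)\le\widetilde m$. For the lower bound, $\mathbb{E}^x\int_0^{\tau_D}U(X(s))\,ds=GU(x)\le c\|U\|_D=\beta$, so Jensen's inequality gives $\mathbb{E}^x e^{-\int_0^{\tau_D}U\,ds}\ge e^{-GU(x)}\ge e^{-\beta}$, whence $\Phi(v)\ge\gamma_0 e^{-\beta}=m$. Next, for $v_1,v_2\in\Lambda$, using $|e^{-a}-e^{-b}|\le|a-b|$ for $a,b\ge0$, the uniform Lipschitz bound $|H_x(v_1)-H_x(v_2)|\le C\|v_1-v_2\|_\infty$, and the estimate $\mathbb{E}^x[\tau_D]\le R^2/d$ already used in the proof of Theorem \ref{unicitate}, I obtain
\[
\|\Phi(v_1)-\Phi(v_2)\|_\infty\le\|\phi\|_\infty\,C\,\frac{R^2}{d}\,\|v_1-v_2\|_\infty,
\]
so $\Phi$ is a contraction by \eqref{conditie2}. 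The Banach fixed point theorem yields a unique $u\in\Lambda$ with $u=\Phi(u)$.

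It remains to identify this fixed point as a weak solution with the correct boundary behaviour. Expanding the exponential and using the Markov property gives the Duhamel identity $\Phi(v)=H_D\phi+G(q_v\,\Phi(v))$, which at the fixed point reads $u=H_D\phi-G(F(\cdot,u))$, since $H_x(u)u=F(x,u)$. Because $0\le F(\cdot,u)\le\widetilde m\,U$ is Green-tight, $G(F(\cdot,u))$ is bounded and continuous on $\overline D$ with $\lim_{x\to z}G(F(\cdot,u))(x)=0$ for $z\in\partial D$ by \eqref{Gq=0}; hence $u\in C(D)$, and applying $\tfrac12\Delta$ together with \eqref{ecuatia cu G} and the harmonicity of $H_D\phi$ yields $\tfrac12\Delta u=F(\cdot,u)$ in the weak sense. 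Finally, by Theorem \ref{thm2.5} there is $g_0\in\cb_+(\partial D)$ with $H_D\phi\overset{H_D g_0}{\longrightarrow}\phi$, while $-G(F(\cdot,u))$ converges to $0$ controlled by the bounded function $1=H_D1$; by the linearity of controlled convergence (Remark \ref{rem2.1}(iv)) it follows that $u$ converges to $\phi$ controlled by $k:=H_D g_0+H_D 1=H_D(g_0+1)=:H_D g$.

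The step I expect to be the main obstacle is the contraction estimate together with the correct choice of the map: the naive map $v\mapsto H_D\phi-G(F(\cdot,v))$ fails to contract under \eqref{conditie2}, because the Green potential of $U$ contributes an extra term $\beta$, whereas the Feynman-Kac form confines the dependence on $U$ to a factor $\le1$ and leaves only the constant $C$. The other delicate point is establishing the Duhamel identity and the Green-tightness of $F(\cdot,u)$, so as to transfer the boundary behaviour of $H_D\phi$ to $u$; both follow from the cited properties of $G$ and from Theorem \ref{thm2.5}.
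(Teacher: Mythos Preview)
Your proposal is correct and follows essentially the same approach as the paper: the map $\Phi$ is precisely the paper's operator $T$, the invariance $\Phi(\Lambda)\subset\Lambda$ and the contraction estimate are obtained in the same way, and the identification of the fixed point as a weak solution via the Duhamel identity $Tu=H_D\phi+G(q_uTu)$ together with Theorem~\ref{thm2.5} matches the paper's Lemma~\ref{Lemma}. The only cosmetic difference is that you add $H_D1$ to the control function, whereas the paper observes directly that $G(q_uTu)\to 0$ at the boundary already implies $Tu\overset{k}{\longrightarrow}\phi$ with the same $k=H_Dg$ that controls $H_D\phi$ (since $G(q_uTu)$ is bounded); either way yields a control of the form $H_Dg$.
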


\begin{proof}
We use some arguments from the proof of Theorem 1.1 in \cite{Chen93}. 
 For any real-valued
Borel measurable function $w$ defined on $D$ such that $\left\vert
\left\vert w\right\vert \right\vert _{D}<\infty $ and for every $x\in D,$ $%
\mathbb{P}^{x}$-a.s., we consider the following stopped Feynman-Kac functional which
is well defined, positive and finite for all $t\geq 0$ 
\begin{equation*}
e_{w}\left( t\right) :=\exp (\int_{0}^{t\wedge \tau _{D}}w\left( X\left(
s\right) \right) ds)\text{.}
\end{equation*}%
For any $u\in \Lambda $ and $x\in D$ the function 
\begin{equation*}
q_{u}\left( x\right) :=\frac{-F\left( x,u\left( x\right) \right) }{u\left(
x\right) }\text{ }
\end{equation*}%
is well defined and 
\begin{equation*}
-U\left( x\right) \leq q_{u}\left( x\right) \leq 0\text{ and }\left\vert
q_{u}\left( x\right) \right\vert \leq U\left( x\right).
\end{equation*}%
Thus 
$\left\vert \left\vert q_{u}\right\vert \right\vert _{D}\leq \left\vert \left\vert U\right\vert \right\vert _{D}<\infty$ and $q_u$ is Green-tight on $D$ 
or equivalently (cf. Remark \ref{rem2.6}), $1_Dq_u$ belongs to $J$.

As in (3.15) and (3.16) from  \cite{Chen93} 
\begin{equation*}
\mathbb{E}^{x} \int_{0}^{\tau _{D}}U\left( X\left( t\right) \right) dt \leq
c\left\vert \left\vert U\right\vert \right\vert _{D}=\beta
\end{equation*}%
and 
\begin{equation}
e^{-\beta }\leq \mathbb{E}^{x}\left[ e_{-U}\left( \tau _{D}\right) \right]
\leq \mathbb{E}^{x}\left[ e_{q_{u}}\left( \tau _{D}\right) \right] \leq 1%
\text{ for each }x\in D.  \label{gogu1}
\end{equation}%

We define the operator $T$ on $\Lambda$ as  in (3.18) from \cite{Chen93}, 
\begin{equation*}
Tu\left( x\right) :=\mathbb{E}^{x}\left[ e_{q_{u}}\left( \tau _{D}\right) \phi \left(
X\left( \tau _{D}\right) \right) \right] \text{ for all }  u\in \Lambda  \mbox{ and } x\in D.
\end{equation*}%
By (\ref{gogu1}), we have that 
\begin{equation*}
e^{-\beta }\gamma _{0}\leq \mathbb{E}^{x}\left[ e_{-U}\left( \tau
_{D}\right) \phi \left( X\left( \tau _{D}\right) \right) \right] \leq
Tu\left( x\right) \leq \left\vert \left\vert \phi \right\vert \right\vert
_{\infty }\mathbb{E}^{x}\left[ e_{q_{u}}\left( \tau _{D}\right) \right] \leq
\left\vert \left\vert \phi \right\vert \right\vert _{\infty },
\end{equation*}%
thus $Tu$ is well defined, finite and bounded on $D$ and in particular 
$T\Lambda \subset \Lambda .$

We show that $T$ is a contraction map on $\Lambda $ with respect to the
supremum norm. 
For any $x\in D$ and $u,$ $v\in \Lambda $ we have that%
\begin{eqnarray}
\left\vert Tu\left( x\right) -Tv\left( x\right) \right\vert &\mathbf{=}%
&\left\vert \mathbb{E}^{x}\left[ e_{q_{u}}\left( \tau _{D}\right) \phi
\left( X\left( \tau _{D}\right) \right) \right] -\mathbb{E}^{x}\left[
e_{q_{v}}\left( \tau _{D}\right) \phi \left( X\left( \tau _{D}\right)
\right) \right] \right\vert  \notag \\
&\leq &\left\vert \left\vert \phi \right\vert \right\vert _{\infty }\mathbb{E%
}^{x}\left[ \left\vert e_{q_{u}}\left( \tau _{D}\right) -e_{q_{v}}\left(
\tau _{D}\right) \right\vert \right]  \notag \\
&=&\left\vert \left\vert \phi \right\vert \right\vert _{\infty }\mathbb{E}%
^{x}[|e^{-\underset{0}{\overset{\tau _{D}}{\int }}\frac{F\left(
X(s),u\left( X(s)\right) \right) }{u\left( X(s)\right) }ds}-e^{-\underset{%
0}{\overset{\tau _{D}}{\int }}\frac{F\left( X(s),v\left( X(s)\right)
\right) }{v\left( X(s)\right) }ds}|].  \notag
\end{eqnarray}%
From 
$\left\vert e^{-x}-e^{-y}\right\vert \leq \left\vert x-y\right\vert$
for all $x,$ $y\geq 0,$ we get
\begin{eqnarray*}
\left\vert Tu\left( x\right) -Tv\left( x\right) \right\vert &\leq
&\left\vert \left\vert \phi \right\vert \right\vert _{\infty }\mathbb{E}%
^{x} \left\vert \underset{0}{\overset{\tau _{D}}{\int }}\frac{F\left(
X(s),u\left( X(s)\right) \right) }{u\left( X(s)\right) }ds-\underset{0}{%
\overset{\tau _{D}}{\int }}\frac{F\left( X(s),v\left( X(s)\right) \right) 
}{v\left( X(s)\right) }ds\right\vert  \\
&=&\left\vert \left\vert \phi \right\vert \right\vert _{\infty }\mathbb{E}%
^{x}\underset{0}{\overset{\tau _{D}}{\int }}%
|H_{x}(u(X(s)))-H_{x}(v(X(s)))|ds.
\end{eqnarray*}%
Since, by hypothesis, $H_{x}$ is Lipshitz continuous with the constant $C$ 
for every $x\in D,$ we have that 
\begin{equation} \label{ineq2}
\left\vert Tu\left( x\right) -Tv\left( x\right) \right\vert 
\leq \left\vert \left\vert \phi \right\vert \right\vert _{\infty }
\mathbb{E}^{x}
\underset{0}{\overset{\tau _{D}}{\int }}C|u(X(s))-v(X(s))|ds\leq 
\end{equation}
$$
\left\vert \left\vert \phi \right\vert \right\vert _{\infty
}C\left\vert \left\vert u-v\right\vert \right\vert _{\infty }
\mathbb{E}^{x} \int_{0}^{\tau _{D}}ds 
=\left\vert \left\vert \phi \right\vert \right\vert _{\infty }C\left\vert
\left\vert u-v\right\vert \right\vert _{\infty }\mathbb{E}^{x}[\tau _{D}] .
$$
Let $\tau _{B}=\inf \{t>0$ $:$ $X(t) \notin B\}$ be the exit time of $B,$
then $\mathbb{E}^{x}[\tau _{D}]\leq \mathbb{E}^{x}[\tau _{B}]$ and
by  (7.4.2)
from \cite{Oksental}, we have that 
\begin{equation} \label{eq3.6}
\mathbb{E}^{x}[\tau _{B}]=\frac{1}{d}\left( R^{2}-\left\vert x\right\vert
^{2}\right) \leq \frac{R^{2}}{d}.
\end{equation}%
Then by (\ref{ineq2}) we have 
$\left\vert Tu\left( x\right) -Tv\left( x\right) \right\vert \leq \left\vert
\left\vert \phi \right\vert \right\vert _{\infty }C\frac{R^{2}}{d}\left\vert
\left\vert u-v\right\vert \right\vert _{\infty }$ 
and applying the supremum we get
$\left\vert \left\vert Tu-Tv\right\vert \right\vert _{\infty }\leq \left\vert
\left\vert \phi \right\vert \right\vert _{\infty }C\frac{R^{2}}{d}\left\vert
\left\vert u-v\right\vert \right\vert _{\infty }.$
From (\ref{conditie2}) we have that $0\leq \widetilde{C}:=$ $%
\left\vert \left\vert \phi \right\vert \right\vert _{\infty }C\frac{R^{2}}{d}%
<1$ and thus the operator $T$ is a contraction map over the complete metric
space $\Lambda $ (with respect to metric induced by the supremum norm).
Applying the
Banach fixed-point theorem, there exists a unique fixed-point $u_{0}\in
\Lambda $ such that $Tu_{0}=u_{0}.$ We need now the following lemma.

\begin{lemma} \label{Lemma}  
Let $u\in \Lambda $ and suppose that $\phi >0$ is a bounded and Borel measurable function on $\partial D.$ 
Then $Tu$ is a continuous weak solution to 
the problem
\begin{equation}
\left\{ 
\begin{array}{c}
\frac{1}{2}\Delta v-\frac{F\left( \cdot ,u\right) }{u}v=0\text{ on }D \text{
in the weak sense} \\[2mm]
\hspace*{-15mm} v\text{ converges to }\phi \text{ controlled by }k,%
\end{array}%
\right.   \label{prob2}
\end{equation}
where the control function is $k:= H_D g$
for some  function $g\in $ $\mathcal{B}_+ ( D) $.
\end{lemma}

\noindent
\textbf{Proof of Lemma \ref{Lemma}.} 
We consider the linear Dirichlet problem on $D$ with
boundary data $\phi \in b\mathcal{B}_{+}\left( \partial D\right)$.
Let $h$ be
the stochastic solution to  this problem, $h=H_D \phi$.
We already noted that $h$ is a bounded harmonic function on $D$.
By Theorem \ref{thm2.5}  there exists a function $g\in $ 
$\mathcal{B}_+ ( D) $ such that $h$ converges to $\phi $ controlled
by $k$,  
where $k:= H_D g$
 is
a real-valued, positive superharmonic control function, 
that is 
\begin{equation}
h\overset{k}{\longrightarrow }\phi.  
\label{control}
\end{equation}%
As in (3.26) from \cite{Chen93}, by a straightforward calculation, Fubini's
theorem (in which the required absolute integrability is implied by $%
\left\vert \left\vert q_{u}\right\vert \right\vert _{D}<\infty ),$ the
strong Markov property, and the regularity of $D$, we have that for each $%
x\in D$ 
\begin{equation} 
	Tu\left( x\right) -h\left( x\right) =G\left( q_{u}Tu\right) \left( x\right).	
\label{h + G}
\end{equation} 
From the boundedness of $Tu$ and the fact that $U$ is Green-tight on $D$ and 
$\left\vert q_{u}\left( \cdot \right) \right\vert \leq U\left( \cdot \right) 
$ we have that $q_{u}Tu$ is Green-tight on $D$, thus $G(q_{u}Tu)$ is
continuous and bounded on $\overline{D}$ and it satisfies the equation (\ref{ecuatia cu
G}) with $q_{u}Tu$ instead of $q,$ that is 
\begin{equation}
\frac{1}{2}\Delta G(q_{u}Tu)=-q_{u}Tu\text{ on }D \mbox{ in weak sense}, 
\label{ec pt Green}
\end{equation}%
and from (\ref{Gq=0}) 
\begin{equation}
\lim_{D\ni x\rightarrow y}G(q_{u}Tu)\left( x\right) =0\text{ for every }y\in
\partial D.  \label{G egal cu 0}
\end{equation}%
Furthermore, we have that $\Delta h=0$ on $D$, thus by (\ref{ec pt Green}) $Tu$ 
satisfies the equation 
\begin{equation*}
\frac{1}{2}\Delta v+q_{u}v=0~\text{ on }D  \mbox{ in the weak sense.}\label{ecuatia impartita}
\end{equation*}%
Hence by (\ref{control}), (\ref{h + G}), and (\ref{G egal
cu 0}) $Tu$ converges to $\phi $ controlled by $k$. 
Therefore $Tu$ is continuous and bounded on $D$ and it is a weak  solution to 
the problem (\ref{prob2}), so, the proof of Lemma \ref{Lemma} is complete.

We return now to the proof of Theorem \ref{teorema}. 
By 
Lemma \ref{Lemma} it follows that the fixed point $u_{0}=Tu_{0}$ satisfies
the equation $\frac{1}{2}\Delta u_{0}+q_{u_{0}}u_{0}=0~$ on $D$ and $u_{0}$
converges thus to $\phi $ controlled by $k$, thus $%
u_{0}$ is a weak solution to  the problem (\ref{prob1}). 
\end{proof}

\vspace{2mm}

\begin{remark} \label{rem3.3} 
The proof of Theorem \ref{thm3.1} allows to emphasize  the following  probabilistic representation 
of the solution to the nonlinear Dirichlet problem  $(\ref{prob initiala})$.
Let $v_0\in \Lambda$ and define recurrently 
$$
v_{n+1}:=\mathbb{E}^{\bf\cdot } [ e_{q_{v_n}}\left( \tau _{D}\right) \phi \left(
X\left( \tau _{D}\right) \right) ] \  \mbox{ for }  \  n\geq 0.
$$ 
Then the sequence $(v_n)_{n\geq 0}$ from $\Lambda$ converges uniformly to the solution to the  problem $(\ref{prob initiala})$
\end{remark}

\noindent
{\bf Examples of functions $F$ satisfying the condition from Theorem \ref{thm3.1},} that is,
the Lipschitz constant of $H_x$ does not depend on $x$,  i.e., 
$\sup_{x\in D} \sup_{y\neq z}\frac{\left\vert H_{x}\left( y\right)
-H_{x}\left( z\right) \right\vert }{\left\vert y-z\right\vert }  <\infty$.

$(1)$ Assume that for every $x\in D,$ the function $H_{x}$ is continuous on $%
[m,\widetilde{m}]$ and differentiable on $(m,\widetilde{m}),$ such that 
$C:=\sup_{x\in D} \sup_{y\in (m,M)}|H_{x}^{\prime }\left( y\right) | <\infty.$
Then clearly, 
for every $x\in D,$ $H_{x}$ is Lipschitz continuous on $[m,\widetilde{m}]$ with the constant $C.$

$(2)$ 
Suppose that $F$ has an  extension of a class $C^1$ to $\overline{D}\times \lbrack m,\widetilde{m}]$.
Then $F$ satisfies the required condition.

\vspace{2mm}

We close this section with  a result of uniqueness of the solution to equation $(\ref{prob initiala})$.

\begin{theorem} \label{thm3.3} 
If the nonlinear Dirichlet problem associated with 
the operator $v\longmapsto \frac{1}{2}\Delta v -F\left( \cdot ,v\right) $, 
with boundary data $\phi \in b\mathcal{B}_+\left( \partial D\right) $, 
 has a  weak solution in $ \Lambda$
 then it is unique.
\end{theorem}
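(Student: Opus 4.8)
The plan is to reduce the nonlinear uniqueness to the linear uniqueness result of Theorem \ref{unicitate} by linearising the difference of two solutions. Suppose $u_1,u_2\in\Lambda$ are weak solutions, with superharmonic control functions $k_1,k_2:D\to\mathbb{R}_+$, so that $u_i$ converges to $\phi$ controlled by $k_i$, $i=1,2$. Put $v:=u_1-u_2$. By assertion $(iv)$ of Remark \ref{rem2.1}, $v$ converges controlled by $k:=k_1+k_2$ to $\phi-\phi=0$, and $k$ is again non-negative and superharmonic. Subtracting the two weak equations $\frac{1}{2}\Delta u_i=F(\cdot,u_i)$ gives $\frac{1}{2}\Delta v=F(\cdot,u_1)-F(\cdot,u_2)$ on $D$ in the weak sense. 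Since $u_1,u_2\in C(D)$ are bounded (they lie in $\Lambda$), we have $v\in C_b(D)$.

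The key step is to write the right-hand side as $-\widetilde{q}\,v$ for some $\widetilde{q}$ in the Kato class $J$. Define, for $x\in D$,
\[
\widetilde{q}(x):=\begin{cases} \dfrac{F(x,u_2(x))-F(x,u_1(x))}{u_1(x)-u_2(x)}, & u_1(x)\neq u_2(x),\\[2mm] 0, & u_1(x)=u_2(x),\end{cases}
\]
so that $F(\cdot,u_1)-F(\cdot,u_2)=-\widetilde{q}\,v$ everywhere on $D$ (the identity being trivial where $u_1=u_2$). Writing $F(x,y)=y\,H_x(y)$ and inserting $\pm u_1(x)H_x(u_2(x))$ one obtains
\[
|F(x,u_1(x))-F(x,u_2(x))|\le u_1(x)\,|H_x(u_1(x))-H_x(u_2(x))|+H_x(u_2(x))\,|u_1(x)-u_2(x)|.
\]
Using the uniform Lipschitz bound $|H_x(u_1(x))-H_x(u_2(x))|\le C|u_1(x)-u_2(x)|$, the estimate $u_1\le\widetilde{m}$, and $0\le H_x(y)=F(x,y)/y\le U(x)$, this yields $|\widetilde{q}(x)|\le \widetilde{m}\,C+U(x)$ for every $x\in D$. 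The function $\widetilde{m}\,C$ is bounded with support in the bounded set $D$, so its extension by $0$ lies in $J$, while $U$ is Green-tight, whence $1_D U\in J$ by Remark \ref{rem2.6}; since $J$ is a linear space stable under domination, $1_D\widetilde{q}\in J$. (The Borel measurability of $\widetilde{q}$ follows from the joint measurability of $F$ and the continuity of $u_1,u_2$, together with $u_i\ge m>0$.)

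With this choice $\frac{1}{2}\Delta v+\widetilde{q}\,v=0$ on $D$ in the weak sense, so $v\in C_b(D)$ is a weak solution of the linear Dirichlet problem associated with the operator $\frac{1}{2}\Delta+\widetilde{q}$, $\widetilde{q}\in J$, with boundary data $0$, admitting the superharmonic control function $k=k_1+k_2$. The constant function $0$ is plainly another such weak solution (controlled by the zero function). Applying Theorem \ref{unicitate} to the operator $\frac{1}{2}\Delta+\widetilde{q}$ with boundary data $0$, we conclude that $v=0$ on $D$, that is $u_1=u_2$. The main obstacle is precisely this linearisation: producing a single coefficient $\widetilde{q}$ for which $v$ solves a genuine linear equation of the type covered by Theorem \ref{unicitate} and, crucially, verifying $\widetilde{q}\in J$. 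This is where the uniform-in-$x$ Lipschitz continuity of $H_x$ from Theorem \ref{thm3.1} is indispensable, as it converts the a priori merely continuous dependence of $F$ on $u$ into the Kato-class bound $|\widetilde{q}|\le\widetilde{m}\,C+U$; once this is secured, the boundary bookkeeping is supplied verbatim by Remark \ref{rem2.1}$(iv)$ and the linear uniqueness theorem.
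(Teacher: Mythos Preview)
Your proof is correct, but it takes a genuinely different route from the paper's argument. The paper does not linearise the difference $u_1-u_2$. Instead it observes that each $u_i$ is a weak solution of the \emph{separate} linear problem $\frac{1}{2}\Delta v+q_{u_i}v=0$ with $q_{u_i}=-F(\cdot,u_i)/u_i$; since Lemma \ref{Lemma} shows that $Tu_i$ solves the same linear problem, Theorem \ref{unicitate} forces $u_i=Tu_i$. Both $u_1$ and $u_2$ are then fixed points of the contraction $T$ on $\Lambda$, and Banach's fixed-point theorem gives $u_1=u_2$.

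The trade-off is this: the paper's argument recycles the contraction already established in the proof of Theorem \ref{thm3.1}, so it is shorter, but it relies on the smallness condition (\ref{conditie2}) (i.e., $\|\phi\|_\infty<d/(R^2C)$) to make $T$ a contraction. Your difference-quotient linearisation, by contrast, uses only the uniform Lipschitz bound on $H_x$ to obtain $|\widetilde{q}|\le \widetilde{m}\,C+U$ and hence $1_D\widetilde{q}\in J$; the quantitative condition (\ref{conditie2}) is never invoked. Thus your argument actually yields a slightly stronger uniqueness statement, at the cost of having to construct and control the auxiliary coefficient $\widetilde{q}$.
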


\begin{proof}
Let $u_{1}, u_{2}\in \Lambda$ be two weak solutions  to the nonlinear Dirichlet problem associated
with  the operator $v \longmapsto \frac{1}{2}\Delta v-F\left( \cdot ,v \right)$,
with boundary data $\phi $.
Therefore $u_{1}$ and $u_{2}$ are also weak solutions to the linear Dirichlet
problems associated with the operator $v\longmapsto \frac{1}{2}\Delta v-\frac{%
F\left( \cdot ,u_{1}\right) }{u_1}v$ and respectively $v\longmapsto \frac{1}{2}\Delta v-\frac{%
F\left( \cdot ,u_{2}\right) }{u_2}v$. 
Then $q_{u_{1}}=-\frac{F\left( \cdot ,u_{1}\right) }{u_{1}}$ and $q_{u_{2}}=-\frac{F\left( \cdot ,u_{2}\right) }{u_{2}}$ 
are Green-tight functions on $D$ and by Theorem \ref{unicitate} 
we have that $u_{1}$ and $u_{2}$ are the unique weak solutions to the linear
Dirichlet problems associated with the operator $\frac{1}{2} \Delta
+q_{u_{1}}$ and  respectively $\frac{1}{2} \Delta
+q_{u_{2}}$. 
By Lemma \ref{Lemma} we have that $u_{1}=Tu_{1}$ and $u_{2}=Tu_{2}$, thus $u_{1}$ and $u_{2}$ are two fixed-points of the contraction operator $T$ on $\Lambda$ (cf. the proof of Theorem \ref{thm3.1})  and  we conclude that $u_{1}= u_{2}$. 
\end{proof}



\section{Appendix}
\noindent
{\bf Proof of Theorem \ref{thm2.5}.}
The uniqueness of the solution follows from Corollary \ref{cor2.4}.

To prove the existence, let $x\in D$ be such that $H_D f(x)<  +\infty$ and let  $\sigma := \varepsilon_x  \circ H_D$, it is a probability measure on
$\partial D$.
Let further 
$\mathcal{M} $ be the set of all  functions $ \varphi  \in L^1_+(\partial D, \sigma) $ such that there exists 
$ g \in \mathcal{B}_+(\partial D)$ with  $ \; H_D \varphi \mathop{\longrightarrow}\limits^{k} \varphi$, 
where $k := H_D g$  and $k(x) < +\infty$.   
Note that by Theorem 3.7 from \cite{Port-Stone}, page 106,  
it follows that $h$ and  $k$  are  (real-valued) harmonic functions on $D$.
Since for every $\varphi\in C(\partial D)$ the  classical Dirichlet problem has a solution and  
the solution is precisely $H_D\varphi$, 
from Remark \ref{rem4.1} we get  $C_+(\partial D) \subset \mathcal{M}$. 
We claim that it is sufficient to prove that:
\begin{equation} \label{cond3.1}
\mbox{  if } (\varphi_n)_n \subset \mathcal{M},  \varphi_n \nearrow \varphi \in L^1(\partial D, \sigma), \mbox{ then } \varphi \in \mathcal{M}.
\end{equation}
Indeed, if $(\ref{cond3.1})$ holds then we apply  the monotone class theorem 
for 
$b\cb(\partial D)\cap \mathcal{M}$  and $C_+(\partial D)$ as the multiplicative class.
It follows that $b\cb(\partial D)\subset \cm$. 
Let now $\varphi \in L^1_+(\partial D, \sigma)$. 
From the above considerations the sequence $(\varphi \wedge n)_n$ lies in $\cm$
and applying again $(\ref{cond3.1})$ we conclude that $\varphi \in \cm$, 
hence $\cm=L^1_+ (\partial D, \sigma)$.  

Further we argue as in the proof of Theorem 4.8 from \cite{Be11}.
To prove $(\ref{cond3.1})$ let   $(\varphi_n)_n \subset \mathcal{M},$ $\varphi_n \nearrow \varphi \in L^1(\partial D, \sigma)$, 
and set 
$h_n := H_D \varphi_n$ and  $h := H_D \varphi$.
Then $ h_n \nearrow h$ and $h(x)< +\infty$. 
By hypothesis $h_n \mathop{\longrightarrow}\limits^{k_n} \varphi_n$ 
for all $n$.
We may assume that  $k_n (x)  = 1$ for all $n$ and
define  $ k_o:= {\sum}_{n} \frac{k_n}{2^n}$.
It follows that $h_n \mathop{\longrightarrow}\limits^{k_o} \varphi_n$ 
for all $n$.
Let
$l := \mathop{\sum}\limits_{n \geq 1} n(h_{n+1} - h_n) = \mathop{\sum}\limits_{n \geq 1} (h - h_n).$
From $h_n \nearrow h $ we have $ h_n(x)  \nearrow h(x)  < +\infty$.
Passing to a subsequence  we may assume that 
$\sum_{n} (h(x) - h_n(x)) < \infty$. 
Consequently,  we get $l (x)< +\infty$ 
and $l = H_D g$ with   $g \in \mathcal{B}_+(\partial D)$.
By  Proposition 1.7 from \cite{Cornea98}
we conclude  that $ h \mathop{\longrightarrow}\limits^{k_o + l} \varphi$  
so, $\varphi \in \mathcal{M}$
and therefore $(\ref{cond3.1})$ holds, completing the proof.  $\hfill\square$

\vspace{2mm}


\noindent
{\bf Acknowledgments.} 
This work was supported by a grant of the Ministry of Research, Innovation and Digitization, CNCS - UEFISCDI,
project number PN-III-P4-PCE-2021-0921, within PNCDI III. 
The authors acknowledge enlightening discussions with Gheorghe Bucur and Iulian C\^ impean during the preparation of this paper.


\end{document}